\definecolor{cobalt}{RGB}{61,99,181}
\newtheorem{thm}{Theorem}[section]
\newtheorem{defi}[thm]{Definition}
\newtheorem{lem}[thm]{Lemma}
\newtheorem{rem}[thm]{Remark}
\numberwithin{equation}{section}
\date{\today}
\newcommand{\Rmnum}[1]{\expandafter\@slowromancap\romannumeral #1@}
\begin{document}

\title[Toeplitz operators]{Toeplitz operators on $\mathcal L^p$-spaces of a tree}

\author[Mingmei Huang]{Mingmei Huang}
\address{
\textsuperscript{1}
 College of Mathematics and Statistics, Chongqing University, Chongqing, 401331, P. R. China}
\email{huangmm0909@163.com}

\author[Xiaoyan Zhang]{Xiaoyan Zhang}
\address{
\textsuperscript{2}
 College of Mathematics and Statistics, Chongqing University, Chongqing, 401331, P. R. China}
\email{xiaoyanzhang@cqu.edu.cn}

\author[Xianfeng Zhao]{Xianfeng Zhao}
\address{
\textsuperscript{3}
College of Mathematics and Statistics, Chongqing University, Chongqing, 401331, P. R. China}
\email{xianfengzhao@cqu.edu.cn}

\keywords{Toeplitz operator; spectrum; positivity; finite rank operator; tree}

\subjclass[2010]{47A30, 47B37, 05C05}

\begin{abstract}
 Let $T$ be a rooted, countable infinite tree without terminal vertices. In the present paper, we characterize the spectra, self-adjointness and positivity of Toeplitz operators on the spaces of  $p$-summable functions on $T$. Moreover, we obtain a necessary and sufficient condition for Toeplitz operators to have finite rank on  such function spaces.
\end{abstract} \maketitle

\section{Introduction}\label{Intro}

Let us begin with the preliminary definitions, concepts and relevant notation on trees.  A \emph{graph} $G$ is a pair $(V,E)$ consisting of a countably-infinite set of \emph{vertices} $V$ and a set of \emph{edges} $E\subset \big\{(u, v): u, v\in V, u\neq v\big\}$ (which is a subset of   $V\times V$).  If $(u, v)\in E$, then  we say that $u$ and $v$ are \emph{adjacent} (or \emph{neighbors}) and we denote this by $u\thicksim v$. For each vertex $v$, the \emph{degree} of $v$ is the number of vertices adjacent to $v$, which is denoted  by $\mathrm{deg}(v)$. We say that $G$ is \emph{locally finite} if the degree of every vertex is finite. We call that a vertex $v$ is a \emph{terminal vertex} if $v$ has a unique neighbor. A \emph{path of length $n$} joining two vertices $u$ and $v$ is a finite sequence of $(n+1)$ distinct vertices $$u=u_0\thicksim u_1\thicksim u_2 \thicksim\cdots \thicksim u_n=v.$$

 By a  \emph{tree} $T$ we mean a locally finite, connected and simply-connected graph, i.e., for each pair of vertices there is one and only one path between them. We shall identify $T$ with the collection of its vertices $V$.  In the present paper, the tree we consider has a distinguished vertex, which we call the root of $T$ and we denote it by $o$. The length of a path joining the root $o$ and a vertex $v$ is called the \emph{length} of $v$ and is denoted by $|v|$.

Given a tree $T$ rooted at $o$ and a vertex $v\in T$, a vertex $w$ is called a \emph{descendant}  of $v$ if $v$ lies in the unique path from $o$ to $w$. In this case, the vertex $v$ is called an \emph{ancestor} of $w$. We denote by $v^{-}$ the neighbor of a vertex $v$ which is an ancestor of $v$, and  the vertex $v$ is called a \emph{child} of $v^{-}$. For each $w\in T$, we denote the set of all children of $w$ by $\mathrm{Chi}(w)$. Given a vertex $v$, the \emph{sector determined by $v$} is the set $S_v$ consisting of $v$ and all its descendants. In particular, $S_o=T$.

Let $1\leqslant p<\infty$. The function space $\mathcal L^p(T)$ of $T$ is defined as the set of  functions $f: T\rightarrow \mathbb C$ such that
$$\sum_{v\in T}|f(v)|^p<\infty.$$
It is easy to show that every  $\mathcal{L}^p(T)$ is a Banach space when endowed with the norm
$$\|f\|_p:=\bigg(\sum_{v\in T}|f(v)|^p\bigg)^{\frac{1}{p}},$$
since it is isomorphic to the  $p$-summable sequence  space $\ell^p(V)$.
 Moreover,  the dual space of $\mathcal{L}^p(T)$ with $1<p<\infty$ can be identified with $\mathcal{L}^q(T)$ under the sesquilinear dual  pairing
$$\langle f, g\rangle:=\sum_{v\in T}f(v)\overline{g(v)},\ \ \ f\in \mathcal{L}^p(T),\ \   g\in \mathcal{L}^q(T),$$
where $q=\frac{p}{p-1}$ is the conjugate exponent of $p$. In particular, $\mathcal L^2(T)$ is a Hilbert space with the obvious inner product.
When $p=\infty$, the space $\mathcal{L}^\infty(T)$ is the collection of bounded functions $f$ on $T$ equipped with the supremum norm $\|f\|_\infty=\sup\big\{|f(v)|: v\in T\big\}$.  In addition, we say that $q=\infty$ is the conjugate exponent of $p=1$ and that $q=1$ is the conjugate exponent of $p=\infty$.

The interest of the study of operators on infinite trees is motivated mainly by the research in harmonic
analysis dealing with the spectrum of the Laplace operator on discrete structures, one can consult \cite{Ca1} and \cite{Ca2} for more information. Linear operators on discrete structures other than Laplacian have been studied by many authors.  For instance:
\begin{itemize}
\item \emph{Toeplitz operators.}  Pavone studied the properties of  Toeplitz operators on a discrete group and discussed the extent to which they paraller the properties of Topelitz operators on the Hardy space of the unit circle, see \cite{Pa, Pa2}; In \cite{CM}, Colonna and  Mart\'{\i}nez-Avenda\~{n}o defined the Toeplitz operator on $\mathcal L^p$-spaces of a tree, where $1\leqslant p\leqslant \infty$, and then characterized the boundedness and the eigenvalues of such Toeplitz operator; Motivated by the paper \cite{CM}, Zhang and Zhao \cite{ZZ} established several sufficient conditions for Toeplitz operators to be compact on  $\mathcal L^p$-spaces of a tree.
\item \emph{Composition operators.}  Pavone \cite{Pa1} characterized the class of hypercyclic composition operators on the $\mathcal L^p$-space of the Poisson boundary of a homogeneous tree,  and showed that such operators are actually chaotic. Allen, Colonna and Easley \cite{AC} investigated the boundedness, compactness and spectra  of the composition operators on the Lipschitz space of a tree; Later, Allen and Pons (\cite{AP, AP1}) obtained the characterizations for the (weighted) composition operators with closed range, and to be bounded, compact, bounded below, invertible and Fredholm on weighted  Banach spaces of a tree; In addition, the bounded, compact, invertible, isometric composition operators on Hardy spaces of the homogenous rooted trees were investigated in the recent paper \cite{Mu}.
\item \emph{Shift operators.} The operator-norm estimate, spectral properties, hyponormality, subnormality  and hypercyclicity of (weighted) shift operators on certain function spaces of a tree were well-studied  in  \cite{J, Ma1, Ma}; Moreover, the norm estimate, kernels and eigenvalues  of  the shift operator (usually called the adjacency operator) on the $\mathcal L^p$-space of a graph were systematically investigated  in the paper \cite{AB}.
\item {\emph{Multiplication operators.}} The  boundedness, compactness, invertibility, hypercyclicity  and essential norm of multiplication operators on Lipschitz-type spaces of a tree were discussed in a series of papers of Allen, Colonna, Easley and Prudhom \cite{Al2, Al3, Al1, Al4, Co1, Co2}.
\end{itemize}

However, little is known about the spectral properties, self-adjointness and positivity of  classical operators on discrete structures.  The purpose of this paper is to study  some  fundamental properties about Toeplitz operators on  $\mathcal L^p$-spaces of a tree, such as the spectrum, self-adjointness, positivity and finite rank.  Before stating  the definition of the Toeplitz operator on  $\mathcal L^p$-spaces of a tree, we need to introduce the following two important linear operators on such function spaces. For any function $f: T\rightarrow \mathbb C$, the operator $\nabla$ is defined by
\begin{align*}
(\nabla f)(u):=f(u)-\sum_{v^{-}=u}f(v), \ \  \ \ u\in T.
\end{align*}
Recall that the  operator $\Delta$ is  the transformation with domain $\mathcal L^1(T)$ defined by
$$(\Delta f)(u):=\sum_{v\in S_u} f(v), \ \ \ \  f\in \mathcal L^1(T), \ \ \ u\in T,$$
where $S_u$ is the sector determined by $u$. For more details concerning the above two operators, one can consult  \cite{CM}.
Let $1\leqslant p \leqslant \infty$ and $\varphi$ be a complex-valued function on $T$. The \emph{Toeplitz operator} on $\mathcal L^p(T)$ with symbol $\varphi$ is defined by
$$T_\varphi f:=\Delta(\varphi \nabla f), \ \ \ \ f\in \mathcal L^p(T).$$
Some useful properties and important conclusions about  $\nabla$, $\Delta$ and  Toeplitz operators on $\mathcal L^p(T)$ will be included in the next section.

The main part of this paper is organized as follows.  In Section \ref{spectra}, we will study the spectra of some classes of bounded Topelitz operators on the Banach spaces $\mathcal L^p(T)$ with $1\leqslant p\leqslant \infty$. More precisely, we will show in Theorems \ref{spectrum} and \ref{spectrum2} that the spectrum of the Topelitz operator is equal to the closure of the range of its symbol under certain  mild assumptions. This answers  an open question posed by Colonna and  Mart\'{\i}nez-Avenda\~{n}o \cite{CM}. In Section \ref{positivity}, we investigate the self-adjointness and positivity of Topelitz operators on  the Hilbert space $\mathcal L^2(T)$. Surprisingly, we find that there is no nontrivial  self-adjoint Topelitz operator on  $\mathcal L^2(T)$, see Theorem \ref{SP} for the detailed discussion. Section \ref{finite rank}  is devoting to the characterization for  finite rank Toeplitz operators on $\mathcal L^p(T)$ with $1\leqslant p\leqslant \infty$. In fact, we will  prove that the Toeplitz operator on $\mathcal L^p(T)$ is of finite rank if and only if the support of its symbol is a finite set, see Theorem \ref{FR} for the proof.

\section{Preliminary}\label{S2}
In this section, we recall some important definitions and known results which will be required  in the next three sections. We shall make use of the  following derivative of a function on a tree repeatedly.
\begin{defi}\label{f'}
Given a complex-valued function $f$ on a tree $T$.  We define the  function $f_{-}$  on  $T$ by
\begin{align*}
f_{-}(v)=
\begin{cases}
f(v^{-}), & \mathrm{if} \ \ \ v\neq o, \vspace{2mm}\\
0, & \mathrm{if} \ \ \ v=o.
\end{cases}
\end{align*}
In addition, the function $f'$ on $T$ is defined by
$$f'(v)=f(v)-f_{-}(v), \ \ \  v\in T.$$
\end{defi}

Recall that the operators $\nabla$ and $\Delta$ were introduced in Section \ref{Intro}. The relationship between these two operators on $\mathcal L^1(T)$ is given by the following lemma, see \cite[Proposition 5.7]{CM} if necessary.
\begin{lem}\label{DN}
If $f\in \mathcal L^{1}(T)$, then $\Delta(\nabla f)=f$ and $\nabla(\Delta f)=f.$
\end{lem}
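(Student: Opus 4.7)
The plan is to verify both identities pointwise by expanding the definitions and applying a Fubini/telescoping argument, with the hypothesis $f\in\mathcal L^1(T)$ used only to ensure absolute convergence so that the double sums may be rearranged freely.

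For the first identity, I would fix $u\in T$ and write
\[
\Delta(\nabla f)(u)=\sum_{v\in S_u}(\nabla f)(v)=\sum_{v\in S_u}f(v)-\sum_{v\in S_u}\sum_{w^{-}=v}f(w).
\]
The key observation is that if $v\in S_u$ and $w^{-}=v$, then $w\in S_u\setminus\{u\}$; conversely, every $w\in S_u\setminus\{u\}$ arises in this way exactly once (its unique parent $w^{-}$ is automatically in $S_u$). Hence the inner double sum collapses to $\sum_{w\in S_u\setminus\{u\}}f(w)$, which cancels all but the $v=u$ term of the outer sum, giving $\Delta(\nabla f)(u)=f(u)$. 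Absolute convergence, guaranteed by $\sum_{v\in T}|f(v)|<\infty$, legitimises the exchange of order of summation.

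For the second identity, I would again fix $u\in T$ and use the partition $S_u=\{u\}\cup\bigsqcup_{v^{-}=u}S_v$, which holds because every descendant of $u$ either equals $u$ or belongs to $S_v$ for exactly one child $v$ of $u$. This yields
\[
(\Delta f)(u)=f(u)+\sum_{v^{-}=u}\sum_{w\in S_v}f(w)=f(u)+\sum_{v^{-}=u}(\Delta f)(v),
\]
and subtracting the sum on the right gives precisely $\nabla(\Delta f)(u)=f(u)$.

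The proof involves no real obstacle beyond bookkeeping; the only subtlety is justifying the interchange/rearrangement of infinite sums (since a vertex may have infinitely many children and $S_u$ may be infinite), and this is handled uniformly by absolute summability of $f$ on $T$. I would make this explicit with one sentence citing Fubini/Tonelli for counting measure, and then carry out the two telescoping computations above.
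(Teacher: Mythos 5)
Your argument is correct: the bijection between children of vertices in $S_u$ and the set $S_u\setminus\{u\}$, the partition $S_u=\{u\}\sqcup\bigsqcup_{v^{-}=u}S_v$, and the appeal to absolute summability of $f$ to justify the rearrangements are exactly what is needed, and the telescoping goes through as you describe. The paper itself offers no proof of this lemma --- it is quoted verbatim from \cite[Proposition 5.7]{CM} --- so your write-up is a correct, self-contained substitute for the cited argument and follows the standard route one would expect there.
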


The following expression for the Toeplitz operator on $\mathcal L^p(T)$ is quite useful in the study of the boundedness, compactness, positivity and spectra of Toeplitz operators,  which was proved in \cite[Lemma 6.2]{CM}.
\begin{lem}\label{T}
Let $1<p\leqslant \infty$ and let $q$ be the conjugate exponent of $p$.
 If $\varphi$ and $\varphi'$ are both in $\mathcal L^q(T)$ and $f\in \mathcal L^p(T)$, then we have that $\varphi \nabla f\in \mathcal L^1(T)$ and
$$T_\varphi f=f\varphi_{-}+\Delta(f\varphi').$$
\end{lem}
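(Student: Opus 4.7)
The plan is to first verify that $\varphi \nabla f \in \mathcal{L}^1(T)$, which guarantees that $\Delta(\varphi\nabla f)$ is well defined and, more importantly, licenses the rearrangement of double sums in the identification step. Expanding the definition gives
$$(\varphi\nabla f)(u) = \varphi(u)f(u) - \sum_{v^{-}=u}\varphi(u)f(v),$$
and the crucial observation is that $\varphi(u) = \varphi_{-}(v)$ whenever $v^{-}=u$. Since the map $v\mapsto v^{-}$ is a bijection from $T\setminus\{o\}$ onto $\bigsqcup_{u\in T}\{v:v^{-}=u\}$, summing absolute values over $u$ and reindexing yields
$$\|\varphi\nabla f\|_1 \leqslant \|\varphi f\|_1 + \|\varphi_{-}f\|_1.$$
Two applications of Hölder's inequality (with conjugate exponents $p,q$) bound the right side by $(\|\varphi\|_q+\|\varphi_{-}\|_q)\|f\|_p$, which is finite because $\varphi_{-}=\varphi-\varphi'\in\mathcal{L}^q(T)$ by hypothesis.

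Next I would compute $T_\varphi f$ pointwise. By definition and Lemma \ref{DN} applied to $\varphi\nabla f\in\mathcal{L}^1(T)$,
$$(T_\varphi f)(u) = \sum_{w\in S_u}(\varphi\nabla f)(w) = \sum_{w\in S_u}\varphi(w)f(w) \;-\;\sum_{w\in S_u}\sum_{v^{-}=w}\varphi(w)f(v).$$
Absolute convergence (from the previous step) justifies flattening the iterated sum. The key combinatorial fact is that $(w,v)\mapsto v$ is a bijection between $\{(w,v):w\in S_u,\; v^{-}=w\}$ and $S_u\setminus\{u\}$, since $v^{-}\in S_u$ forces $v\in S_u$, and $v=u$ is excluded because $u^{-}\notin S_u$. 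Under this bijection $\varphi(w)=\varphi_{-}(v)$, so the double sum collapses to $\sum_{v\in S_u\setminus\{u\}}\varphi_{-}(v)f(v)$.

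Finally, combining the two contributions and peeling off the $u$-term from the first sum,
$$(T_\varphi f)(u) = \varphi(u)f(u) + \sum_{w\in S_u\setminus\{u\}}\bigl[\varphi(w)-\varphi_{-}(w)\bigr]f(w) = \varphi(u)f(u) + \sum_{w\in S_u\setminus\{u\}}\varphi'(w)f(w).$$
On the other hand, $\bigl(f\varphi_{-}+\Delta(f\varphi')\bigr)(u) = f(u)\varphi_{-}(u) + f(u)\varphi'(u) + \sum_{w\in S_u\setminus\{u\}}f(w)\varphi'(w)$, and $\varphi_{-}(u)+\varphi'(u)=\varphi(u)$ matches the two expressions. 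The only real obstacle is the bookkeeping for the index-swap, which is why establishing the $\mathcal{L}^1$-membership up front — rather than as an afterthought — is essential; all the remaining work is algebraic identification using $\varphi=\varphi_{-}+\varphi'$.
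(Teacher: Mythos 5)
Your proof is correct. The paper itself gives no argument for this lemma---it is quoted verbatim from \cite[Lemma 6.2]{CM}---and your computation (H\"older plus $\varphi_{-}=\varphi-\varphi'$ for the $\mathcal L^1$ membership, then the bijection $\{(w,v):w\in S_u,\ v^{-}=w\}\leftrightarrow S_u\setminus\{u\}$ to collapse the double sum) is the standard direct verification; the only cosmetic blemishes are the unnecessary appeal to Lemma \ref{DN} where only the definition of $\Delta$ on $\mathcal L^1(T)$ is used, and the phrasing of the exclusion $v\neq u$ at $u=o$, where one should instead note that $o$ has no parent.
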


The next two lemmas were established in  \cite[Theorems 6.4 and 6.6]{CM}, which give nice sufficient conditions for the boundedness of Toeplitz operators on $\mathcal L^p(T)$ with $1\leqslant p \leqslant \infty$.

\begin{lem}\label{TB}
Assume that $1\leqslant p<\infty$ and $q$ is the conjugate exponent of $p$. If the functions $\varphi$ and $\widehat{\varphi}$ are both in $\mathcal L^q(T)$, then the Toeplitz operator $T_\varphi$ is bounded on $\mathcal L^p(T)$, where $\widehat{\varphi}$ is defined by $$\widehat{\varphi}(v):=(|v|+1)\varphi'(v), \ \ \ \ v\in T.$$
\end{lem}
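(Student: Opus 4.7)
The plan is to exploit Lemma \ref{T}, which gives the decomposition $T_\varphi f=f\varphi_{-}+\Delta(f\varphi')$, and then bound the two summands in $\mathcal L^p(T)$ separately. To apply Lemma \ref{T} I first need $\varphi'\in\mathcal L^q(T)$, which is immediate from $\widehat{\varphi}\in\mathcal L^q(T)$ because $|v|+1\geqslant 1$. The ``multiplication'' piece $f\varphi_{-}$ is easy: counting measure makes the $\ell^\infty$ norm no larger than any $\ell^q$ norm, so $\|\varphi_{-}\|_\infty\leqslant\|\varphi\|_\infty\leqslant\|\varphi\|_q$, and hence $\|f\varphi_{-}\|_p\leqslant\|\varphi\|_q\|f\|_p$.

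The substantive step is bounding the tail $\Delta(f\varphi')(u)=\sum_{w\in S_u}f(w)\varphi'(w)$. I would write $\varphi'(w)=\widehat{\varphi}(w)/(|w|+1)$ and split the denominator symmetrically as $(|w|+1)^{-1}=(|w|+1)^{-1/p}(|w|+1)^{-1/q}$ before applying H\"older's inequality on $S_u$. This yields
$$|\Delta(f\varphi')(u)|\leqslant\bigg(\sum_{w\in S_u}\frac{|f(w)|^p}{|w|+1}\bigg)^{1/p}\bigg(\sum_{w\in S_u}\frac{|\widehat{\varphi}(w)|^q}{|w|+1}\bigg)^{1/q}\leqslant\|\widehat{\varphi}\|_q\bigg(\sum_{w\in S_u}\frac{|f(w)|^p}{|w|+1}\bigg)^{1/p},$$
the last step because $(|w|+1)^{-1}\leqslant 1$ and the inner sum may be extended to all of $T$ (with the standard reinterpretation when $q=\infty$). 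Raising to the $p$th power, summing over $u\in T$, and swapping the two sums by Tonelli, the combinatorial identity $\#\{u\in T:w\in S_u\}=|w|+1$ (the ancestors of $w$ together with $w$ itself) cancels exactly the weight $1/(|w|+1)$, producing $\|\Delta(f\varphi')\|_p^p\leqslant\|\widehat{\varphi}\|_q^p\|f\|_p^p$. Combined with the first estimate this delivers the operator norm bound $\|T_\varphi\|\leqslant\|\varphi\|_q+\|\widehat{\varphi}\|_q$.

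The main obstacle, and the reason $\widehat{\varphi}$ rather than $\varphi'$ appears in the hypothesis, is the design of that H\"older split. A naive unweighted application would leave a stray factor $|w|+1$ after swapping sums, which would not be absorbable by $\|f\|_p^p$; the conjugate weighting $(|w|+1)^{-1/p}(|w|+1)^{-1/q}$ is engineered precisely so that the tree's overcounting of sectors is cancelled by the weight, and the extra growth of $\widehat{\varphi}$ versus $\varphi'$ is exactly what is needed to compensate.
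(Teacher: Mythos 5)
Your argument is correct, and note that the paper does not actually prove Lemma \ref{TB}: it is quoted from \cite[Theorem 6.4]{CM} without proof. Measured against the way the same estimate is carried out elsewhere in the paper (in the proof of Theorem \ref{spectrum}, where $\Delta\big[(\tfrac{\varphi}{\varphi-\lambda})'g\big]$ is bounded), your route is genuinely different. The paper's device is the crude embedding $\|h\|_p\leqslant\|h\|_1$ on counting measure: it dominates $\|\Delta(f\varphi')\|_p$ by $\|\Delta(f\varphi')\|_1$, expands the double sum, uses the identity $\sum_{v}\sum_{u\in S_v}|h(u)|=\sum_u(|u|+1)|h(u)|$, and finishes with a single unweighted H\"older application, arriving at the same bound $\|\widehat{\varphi}\|_q\|f\|_p$ for the $\Delta$ term. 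You instead keep everything in $\mathcal L^p$, using the conjugate splitting $(|w|+1)^{-1}=(|w|+1)^{-1/p}(|w|+1)^{-1/q}$ inside each sector, then Tonelli together with the ancestor count $\#\{u: w\in S_u\}=|w|+1$. Both are valid and yield the identical operator-norm bound $\|\varphi\|_q+\|\widehat{\varphi}\|_q$; the $\mathcal L^1$ route is shorter, while yours avoids the lossy step $\|\cdot\|_p\leqslant\|\cdot\|_1$ and would survive in settings where that embedding is unavailable. One small point to patch: Lemma \ref{T} is stated only for $1<p\leqslant\infty$, so for $p=1$ (where $q=\infty$) you should justify the decomposition $T_\varphi f=f\varphi_-+\Delta(f\varphi')$ directly; this is immediate because $\|\nabla f\|_1\leqslant 2\|f\|_1$ makes $\varphi\nabla f$ summable when $\varphi\in\mathcal L^\infty(T)$, so the rearrangement proving the identity is absolutely convergent, after which your $q=\infty$ reinterpretation of the H\"older step goes through.
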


\begin{lem}\label{TB2}
If $\varphi$ and $\varphi'$ are both in $\mathcal L^1(T)$,  then the Toeplitz operator $T_\varphi$ is bounded on $\mathcal L^\infty(T)$.
\end{lem}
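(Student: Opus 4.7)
The plan is to reduce the problem to the decomposition of $T_\varphi f$ afforded by Lemma \ref{T} and then bound each of the two resulting pieces in the supremum norm.

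First I would note that when $p=\infty$ the conjugate exponent is $q=1$, so the hypotheses of Lemma \ref{T} are precisely $\varphi,\varphi'\in\mathcal L^1(T)$, which we are assuming. Consequently, for every $f\in\mathcal L^\infty(T)$ we have $\varphi\nabla f\in\mathcal L^1(T)$ and
\[
T_\varphi f = f\varphi_{-}+\Delta(f\varphi').
\]
Hence it is enough to show that each summand lies in $\mathcal L^\infty(T)$ with $\mathcal L^\infty$-norm controlled linearly by $\|f\|_\infty$.

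For the first summand I would use the elementary fact that on a countable tree one has $\mathcal L^1(T)\subset\mathcal L^\infty(T)$ with $\|\varphi\|_\infty\leqslant\|\varphi\|_1$, because $|\varphi(v)|\leqslant\sum_{u}|\varphi(u)|$ for each $v$. This immediately gives
\[
\|f\varphi_{-}\|_\infty\leqslant \|\varphi_{-}\|_\infty\|f\|_\infty\leqslant \|\varphi\|_1\|f\|_\infty.
\]
For the second summand, for any $u\in T$,
\[
|\Delta(f\varphi')(u)| = \Big|\sum_{v\in S_u} f(v)\varphi'(v)\Big|\leqslant \|f\|_\infty\sum_{v\in S_u}|\varphi'(v)|\leqslant \|f\|_\infty\|\varphi'\|_1,
\]
so $\|\Delta(f\varphi')\|_\infty\leqslant \|\varphi'\|_1\|f\|_\infty$. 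Adding the two estimates yields
\[
\|T_\varphi f\|_\infty\leqslant (\|\varphi\|_1+\|\varphi'\|_1)\|f\|_\infty,
\]
which proves both that $T_\varphi$ maps $\mathcal L^\infty(T)$ into itself and that it is bounded.

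There is no serious obstacle here. Lemma \ref{T} does the structural work of rewriting $T_\varphi f$ in a form that no longer involves $\nabla f$ (which is merely in $\mathcal L^\infty$), and the free inclusion $\mathcal L^1(T)\subset\mathcal L^\infty(T)$ handles the multiplicative term $f\varphi_{-}$. The only point that deserves a careful word is the estimate on $|\Delta(f\varphi')(u)|$: the sector $S_u$ varies with $u$, yet the bound must be independent of $u$, which is automatic once $\|f\|_\infty$ is pulled out and the sum over $S_u$ is dominated by the full $\ell^1$-norm of $\varphi'$.
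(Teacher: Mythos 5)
Your proof is correct, and it follows exactly the route this paper (which only cites \cite{CM} for this lemma) uses elsewhere: decompose $T_\varphi f=f\varphi_{-}+\Delta(f\varphi')$ via Lemma \ref{T} with $p=\infty$, $q=1$, and bound the two pieces by $\|\varphi\|_1\|f\|_\infty$ and $\|\varphi'\|_1\|f\|_\infty$ respectively, which is precisely the estimate pattern appearing in the proof of Theorem \ref{spectrum2}. No gaps.
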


Recall that \cite[Theorem 6.10]{CM} obtained a characterization on the point spectrum (the set of eigenvalues) of the Toeplitz operator on $\mathcal L^p(T)$ in terms of  the range of its symbol.  For the sake of convenience, we quote this theorem in the following.
\begin{lem}\label{PS}
Let $1\leqslant p\leqslant \infty$ and let $q$ be the conjugate exponent of $p$.
Suppose that $\varphi, \varphi'\in \mathcal L^q(T)$ and the Toeplitz operator $T_\varphi$ is bounded on $\mathcal L^p(T)$. Then:\\
$(a)$ $\sigma_p(T_\varphi)=\{\varphi(w): w\in T\}$ if there is no nonzero function $g\in \mathcal L^p(T)$ with $\nabla g=0$;\\
$(b)$ $\sigma_p(T_\varphi)=\{0\}\cup\{\varphi(w): w\in T\}$ if there is a nonzero function $g\in \mathcal L^p(T)$ with $\nabla g=0$.
\end{lem}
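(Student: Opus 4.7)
The plan is to reduce the eigenvalue equation $T_\varphi f = \lambda f$ to the purely pointwise condition $(\varphi - \lambda)\nabla f = 0$, and then harvest both inclusions in both cases from this single equivalence.

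First I would invoke Lemma~\ref{T} to rewrite $T_\varphi f = \lambda f$ as $\Delta(f \varphi') = (\lambda - \varphi_-) f$. H\"older's inequality gives $f\varphi' \in \mathcal L^1(T)$, so Lemma~\ref{DN} permits me to apply $\nabla$, yielding $f\varphi' = \nabla\bigl((\lambda - \varphi_-)f\bigr)$. Expanding the right-hand side pointwise at a vertex $u$ and exploiting the crucial identity $\varphi_-(v) = \varphi(u)$ whenever $v^- = u$, a short cancellation produces
\[(\varphi(u) - \lambda)(\nabla f)(u) = 0 \qquad \text{for every } u \in T.\]
This algebraic simplification is the heart of the argument.

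Next, for any $w \in T$ I would exhibit an explicit eigenvector for the eigenvalue $\varphi(w)$: take $f_w := \Delta(\delta_w)$, where $\delta_w$ denotes the indicator of $\{w\}$. Since $\delta_w \in \mathcal L^1(T)$, Lemma~\ref{DN} gives $\nabla f_w = \delta_w$; and $f_w$ is simply the indicator of the (finite) path from $o$ to $w$, hence lies in $\mathcal L^p(T)$ for every $p$. A direct calculation then yields $T_\varphi f_w = \Delta(\varphi \delta_w) = \varphi(w)\,\Delta(\delta_w) = \varphi(w)\,f_w$, so $\{\varphi(w) : w \in T\} \subseteq \sigma_p(T_\varphi)$ in both cases.

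For the reverse inclusion, take $\lambda \in \sigma_p(T_\varphi)$ with eigenvector $f \neq 0$. By the forward equivalence, $(\varphi - \lambda)\nabla f = 0$ pointwise, so $\lambda \notin \{\varphi(w) : w \in T\}$ would force $\nabla f = 0$. In case (a) this contradicts $f \neq 0$, so $\lambda \in \{\varphi(w) : w \in T\}$. In case (b) one can have $\nabla f = 0$ with $f \neq 0$, but then $T_\varphi f = \Delta(\varphi \nabla f) = 0$ forces $\lambda = 0$; conversely, any such $g$ is an honest eigenvector for the eigenvalue $0$, giving $\sigma_p(T_\varphi) = \{0\} \cup \{\varphi(w) : w \in T\}$. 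The sole genuine obstacle is the algebraic unwinding in the first step; once the pointwise identity $(\varphi - \lambda)\nabla f = 0$ is established, the remaining arguments are routine verifications with the explicit eigenvectors $f_w$ and the dichotomy on $\ker \nabla \cap \mathcal L^p(T)$.
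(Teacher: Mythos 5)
This lemma is not proved in the paper at all: it is quoted verbatim from \cite[Theorem 6.10]{CM}, so there is no in-paper argument to compare yours against. Judged on its own, your proof is correct and essentially reconstructs the standard argument: the reduction of $T_\varphi f=\lambda f$ to the pointwise identity $(\varphi(u)-\lambda)(\nabla f)(u)=0$ is right (the cancellation using $\varphi_-(v)=\varphi(u)$ for $v^-=u$ works, including at $u=o$ where $\varphi_-(o)=0$), the eigenvectors $f_w=\Delta(\delta_w)$ (indicators of the path from $o$ to $w$) do satisfy $T_\varphi f_w=\varphi(w)f_w$ and lie in every $\mathcal L^p(T)$, and the dichotomy on $\ker\nabla\cap\mathcal L^p(T)$ correctly separates cases (a) and (b). Two small points deserve a sentence each in a polished write-up. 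First, Lemma \ref{T} as stated covers only $1<p\leqslant\infty$, whereas the lemma you are proving includes $p=1$; for $p=1$ you can bypass Lemma \ref{T} entirely, since $f\in\mathcal L^1(T)$ and $\varphi\in\mathcal L^\infty(T)$ give $\varphi\nabla f\in\mathcal L^1(T)$ directly, and applying $\nabla$ to $\Delta(\varphi\nabla f)=\lambda f$ via Lemma \ref{DN} yields $(\varphi-\lambda)\nabla f=0$ in one step (this shortcut in fact works for all $p$ and would streamline your first paragraph). Second, you phrase the first step as an ``equivalence,'' but you only ever use the forward implication; the converse would require justifying $\Delta(\nabla f)=f$ for $f\in\mathcal L^p(T)$ with $p>1$, which is not immediate from Lemma \ref{DN} — harmless here since you never rely on it, but worth not overclaiming.
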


\section{The spectra of Toeplitz operators}\label{spectra}

The study of the spectral properties of operators on infinite trees is relatively new. In this section, we investigate the spectra of Toeplitz operators on  $\mathcal L^p(T)$ with $1\leqslant p\leqslant \infty$. Let $q$ be the conjugate exponent of $p$ and $\varphi$ be a function in $\mathcal L^q(T)$. Based on the characterization for  eigenvalues of Toeplitz operators (Lemma \ref{PS}), we show that the spectrum of the Toeplitz operator $T_\varphi: \mathcal L^p(T)\rightarrow \mathcal L^p(T)$ with $1\leqslant p<\infty$ equals the closure of $\varphi(T)$ if $\widehat{\varphi}$ is also in $\mathcal L^q(T)$. Recall that the function  $\widehat{\varphi}$ was  introduced in Lemma \ref{TB}. On the other hand, we also prove that the spectrum of the Toeplitz operator $T_\varphi: \mathcal L^\infty(T)\rightarrow \mathcal L^\infty(T)$ is coincided with the closure of $\varphi(T)$ when $\varphi'\in \mathcal L^1(T)$.

Let us consider the case that $1\leqslant p<\infty$ first. As the spectrum of the Topelitz operator on $\mathcal L^1(T)$ was obtained in \cite{CM} via the multiplication operator with the same symbol, it is sufficient for us to discuss the case of $1<p<\infty$.

\begin{thm}\label{spectrum}
Let $1\leqslant p<\infty$ and  $q$ be the conjugate exponent of $p$. Suppose that $\varphi$ and $\widehat{\varphi}$ are both in  $\mathcal L^{q}(T)$,  where $$\widehat{\varphi}(v)=(|v|+1)\varphi'(v), \ \ \ \ v\in T.$$ Then $T_\varphi$ is bounded on $\mathcal L^p(T)$ and in this case we have $$\sigma(T_{\varphi})=\mathrm{clos}\{\varphi(v):v\in T\}.$$
\end{thm}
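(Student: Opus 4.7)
The plan is to prove the spectral identity by two inclusions, handling the case $1<p<\infty$ (the case $p=1$ is already cited from \cite{CM}). The inclusion $\mathrm{clos}\{\varphi(v):v\in T\}\subseteq\sigma(T_\varphi)$ is immediate from Lemma \ref{PS}, which identifies every $\varphi(v)$ as an eigenvalue of $T_\varphi$; closedness of the spectrum then yields the closure inclusion. The real work is the reverse inclusion, and the plan is to obtain it by exhibiting $T_\varphi$ as a compact perturbation of a multiplication operator and then applying Fredholm theory.

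By Lemma \ref{T}, $T_\varphi = M_{\varphi_-} + K$, where $M_{\varphi_-}$ denotes multiplication by $\varphi_-$ and $K f := \Delta(f\varphi')$. The main step will be to show $K$ is compact on $\mathcal L^p(T)$. Writing $Kf = \sum_{w\in T}\varphi'(w)f(w)\chi_{[o,w]}$, where $\chi_{[o,w]}$ is the characteristic function of the ancestors of $w$ (so $\|\chi_{[o,w]}\|_p = (|w|+1)^{1/p}$), I plan to combine Minkowski's inequality with H\"older's inequality (using $q/p=q-1$) to obtain $\|Kf\|_p\leq\|\widehat\varphi\|_q\|f\|_p$. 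Truncating the symbol, $K_n f := \Delta(f\cdot\varphi'\chi_{\{|v|\leq n\}})$ is finite rank because it is supported on $\{|u|\leq n\}$ and depends only on finitely many values of $f$; applying the same estimate to $\varphi'\chi_{\{|v|>n\}}$ yields $\|K-K_n\|_{p\to p}\leq\bigl(\sum_{|v|>n}|\widehat\varphi(v)|^q\bigr)^{1/q}\to 0$, establishing compactness.

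To finish, the no-terminal-vertex assumption makes every vertex a parent, so $\varphi_-(T)=\varphi(T)\cup\{0\}$, and $\varphi\in\mathcal L^q$ on an infinite tree forces $0\in\overline{\varphi(T)}$; hence $\sigma(M_{\varphi_-})=\overline{\varphi(T)}$. For $\lambda$ outside this set, $M_{\varphi_- - \lambda}$ is invertible and $T_\varphi-\lambda = M_{\varphi_- - \lambda} + K$ is a Fredholm operator of index zero; Lemma \ref{PS} gives $\sigma_p(T_\varphi)\subseteq\{0\}\cup\{\varphi(v):v\in T\}\subseteq\overline{\varphi(T)}$, so $T_\varphi-\lambda$ is injective, and an injective Fredholm operator of index zero is invertible. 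The main obstacle is the compactness of $K$: Lemma \ref{TB} only asserts boundedness, so one needs the sharper operator-norm bound $\|K\|_{p\to p}\leq\|\widehat\varphi\|_q$ described above in order to run the truncation argument; once that is in hand, the Fredholm conclusion and the injectivity from Lemma \ref{PS} are routine.
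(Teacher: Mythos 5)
Your argument is correct, but it proves the hard inclusion by a genuinely different route from the paper. The paper writes down an explicit resolvent: for $\lambda\notin\mathrm{clos}\,\varphi(T)$ and $g\in\mathcal L^p(T)$ it sets $f=\frac{1}{\lambda}\bigl[\Delta\bigl(\frac{\varphi}{\varphi-\lambda}\nabla g\bigr)-g\bigr]$, checks that the modified symbol $\frac{\varphi}{\varphi-\lambda}$ again satisfies the hypotheses of Lemma \ref{TB} (the key estimate being $\bigl\|\widehat{\bigl(\tfrac{\varphi}{\varphi-\lambda}\bigr)}\bigr\|_q\leqslant(\tfrac{1}{\delta_0}+\tfrac{\|\varphi\|_\infty}{\delta_0^2})\|\widehat\varphi\|_q$), and then verifies $(T_\varphi-\lambda I)f=g$ directly via $\nabla f=\frac{\nabla g}{\varphi-\lambda}$ and Lemma \ref{DN}; surjectivity plus the injectivity from Lemma \ref{PS} and the Banach inverse operator theorem finish it. You instead split $T_\varphi=M_{\varphi_-}+K$ via Lemma \ref{T}, prove the operator-norm bound $\|Kf\|_p\leqslant\|\widehat\varphi\|_q\|f\|_p$ (your Minkowski--H\"older computation with $q/p=q-1$ is sound, and the truncations $K_n$ are indeed finite rank since each level of a locally finite tree is finite), conclude $K$ is compact, and invoke Fredholm theory; the points you flag as needing care --- that $0\in\mathrm{clos}\,\varphi(T)$ because $q<\infty$ on an infinite tree, and that $\sigma(M_{\varphi_-})=\mathrm{clos}\,\varphi(T)$ --- are exactly the right ones and both check out (note the paper's proof also implicitly needs $\lambda\neq0$, for the same reason). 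The trade-off: the paper's construction is elementary, gives the inverse explicitly, and transfers almost verbatim to the $p=\infty$ setting of Theorem \ref{spectrum2}, where your compactness estimate would need reworking since $q=1$ and the duality/H\"older step changes form; your decomposition buys strictly more structural information, namely that $T_\varphi$ is a compact perturbation of a diagonal operator, hence $\sigma_{\mathrm{ess}}(T_\varphi)=\sigma_{\mathrm{ess}}(M_{\varphi_-})$ comes for free and the compactness phenomena studied in \cite{ZZ} are recovered as a by-product.
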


\begin{proof}
As we mentioned above, the conclusion for the case of $p=1$ was obtained in Section 7 of \cite{CM}. So we need only to consider the case for $p>1$. By Lemma \ref{TB}, we see that the Toeplitz operator $T_\varphi$ is bounded on $\mathcal L^p(T)$. Clearly, $\widehat{\varphi}\in \mathcal L^q(T)$ implies that $\varphi'\in \mathcal L^q(T)$. Then using Lemma \ref{PS}, we have
$$\mathrm{clos}\{\varphi(w):w\in T\}=\mathrm{clos}[\sigma_p(T_\varphi)]\subset \sigma(T_\varphi).$$
To prove the reverse inclusion, it is sufficient to show that $T_\varphi-\lambda I$ is invertible on $\mathcal L^p(T)$ if $\lambda$ is not in the closure of the range of $\varphi$,  where $I$ is the identity operator on $\mathcal L^p(T)$.  Observe that $\lambda \notin \mathrm{clos}\{\varphi(v):v\in T\}$ implies that $\lambda$ is not an eigenvalue of $T_\varphi$. This means that $T_\varphi-\lambda I$ is an injective from $\mathcal L^p(T)$ to $\mathcal L^p(T)$.  According to the Banach inverse operator theorem, we need only to show that $T_\varphi-\lambda I$ is a surjective on $\mathcal L^p(T)$  if $\lambda\notin \mathrm{clos}\{\varphi(v):v\in T\}$.

Suppose that $$\inf_{v\in T}|\varphi(v)-\lambda|\geqslant \delta $$
for some positive constant $\delta$. Let $\delta_0=\min\{\delta, |\lambda|\}$. Then we have that  $\delta_0>0$ and
$$|\varphi(v)-\lambda|\geqslant \delta_0$$
and $$ |\varphi_-(v)-\lambda|\geqslant \delta_0$$
for all $v\in T$.  Letting  $g$ be a function in $\mathcal L^p(T)$, we define
\begin{align}\label{f}
f:=\frac{1}{\lambda}\bigg[\Delta\Big(\frac{\varphi}{\varphi-\lambda}\nabla g\Big)-g\bigg].
\end{align}
In the following, we will  show that $f\in \mathcal L^p(T)$ and $f$ is the preimage of $g$ under $T_\varphi-\lambda I$.

To show $f\in \mathcal L^p(T)$,  we need only to prove that
$$\Delta\Big(\frac{\varphi}{\varphi-\lambda}\nabla g\Big)\in \mathcal L^p(T).$$
Let us show $\widehat{\big(\frac{\varphi}{\varphi-\lambda}\big)}\in \mathcal L^{q}(T)$ first.  Noting that
$$\widehat{\Big(\frac{\varphi}{\varphi-\lambda}\Big)}(v)=(|v|+1)\Big(\frac{\varphi}{\varphi-\lambda}\Big)'(v)=
(|v|+1)\Big[\frac{\varphi(v)}{\varphi(v)-\lambda}-\frac{\varphi_-(v)}{\varphi_-(v)-\lambda}\Big]$$
for $v\in T$,
we have
\begin{align*}
\bigg\|\widehat{\Big(\frac{\varphi}{\varphi-\lambda}\Big)}\bigg\|_{q}&=\bigg[\sum\limits_{v\in T}\Big|(|v|+1)\Big(\frac{\varphi(v)}{\varphi(v)-\lambda}-\frac{\varphi_-(v)}{\varphi_-(v)-\lambda}\Big)\Big|^{q}\bigg]^{\frac{1}{q}}\\
&=\bigg[\sum\limits_{v\in T}\Big|(|v|+1)\Big(\frac{\varphi(v)}{\varphi(v)-\lambda}-\frac{\varphi_-(v)}{\varphi(v)-\lambda}+
\frac{\varphi_-(v)}{\varphi(v)-\lambda}-\frac{\varphi_-(v)}{\varphi_-(v)-\lambda}\Big)\Big|^{q}\bigg]^{\frac{1}{q}}.
\end{align*}
Then triangle inequality gives that
\begin{align*}
\bigg\|\widehat{\Big(\frac{\varphi}{\varphi-\lambda}\Big)}\bigg\|_{q}&\leqslant \bigg[\sum\limits_{v\in T}\Big|(|v|+1)\frac{\varphi(v)-\varphi_-(v)}{\varphi(v)-\lambda}\Big|^{q}\bigg]^{\frac{1}{q}}+
\bigg[\sum\limits_{v\in T}\Big|(|v|+1)\frac{\varphi_-(v)[\varphi(v)-\varphi_-(v)]}{[\varphi_-(v)-\lambda][\varphi(v)-\lambda]}\Big|^{q}\bigg]^{\frac{1}{q}}\\
&=\bigg[\sum\limits_{v\in T}\Big|(|v|+1)\frac{\varphi'(v)}{\varphi(v)-\lambda}\Big|^{q}\bigg]^{\frac{1}{q}}+
\bigg[\sum\limits_{v\in T}\Big|(|v|+1)\frac{\varphi'(v)\varphi_-(v)}{[\varphi_-(v)-\lambda][\varphi(v)-\lambda]}
\Big|^{q}\bigg]^{\frac{1}{q}}\\
&\leqslant \frac{1}{\delta_0}\Big(\sum\limits_{v\in T}\big|(|v|+1)\varphi'(v)\big|^{q}\Big)^{\frac{1}{q}}+
\frac{1}{\delta_0^{2}}\Big(\sum\limits_{v\in T}\big|(|v|+1)\varphi_-(v)\varphi'(v)\big|^{q}\Big)^{\frac{1}{q}}\\
&\leqslant \frac{1}{\delta_0}\Big(\sum\limits_{v\in T}\big|(|v|+1)\varphi'(v)\big|^{q}\Big)^{\frac{1}{q}}+
\frac{\|\varphi\|_\infty}{\delta_0^{2}}\Big(\sum\limits_{v\in T}\big|(|v|+1)\varphi'(v)\big|^{q}\Big)^{\frac{1}{q}}\\
&=\Big(\frac{1}{\delta_0}+\frac{\|\varphi\|_\infty}{\delta_0^{2}}\Big)\|\widehat{\varphi}\|_{q}.
\end{align*}
 This shows that $\widehat{\big(\frac{\varphi}{\varphi-\lambda}\big)}\in \mathcal L^{q}(T)$, since  $\widehat{\varphi}\in\mathcal  L^{q}(T)$.

Next, we are going to show that $\Delta\big(\frac{\varphi}{\varphi-\lambda}\nabla g\big)\in \mathcal L^p(T)$.  Noting that $\frac{\varphi}{\varphi-\lambda}$ and $\big(\frac{\varphi}{\varphi-\lambda}\big)'$ are both  in $\mathcal L^q(T)$, we obtain by Lemma \ref{T} that
$$\Delta\Big(\frac{\varphi}{\varphi-\lambda}\nabla g\Big)=T_{\frac{\varphi}{\varphi-\lambda}}g=\Big(\frac{\varphi}{\varphi-\lambda}\Big)_{-}g+\Delta\Big[\Big(\frac{\varphi}{\varphi-\lambda}\Big)'g\Big].$$
Furthermore, we have
\begin{align*}
\Big\|\Delta\Big[\Big(\frac{\varphi}{\varphi-\lambda}\Big)'g\Big]\Big\|_p & \leqslant \Big\|\Delta\Big[\Big(\frac{\varphi}{\varphi-\lambda}\Big)'g\Big]\Big\|_1=\sum_{v\in T} \bigg| \Delta\Big[\Big(\frac{\varphi}{\varphi-\lambda}\Big)'g\Big](v)\bigg|\\
&\leqslant \sum_{v\in T}\sum_{u\in S_v}\bigg| \Big(\frac{\varphi}{\varphi-\lambda}\Big)'(u)g(u)\bigg|\\
&=\sum_{v\in T}(|v|+1)\bigg| \Big(\frac{\varphi}{\varphi-\lambda}\Big)'(v)g(v)\bigg|\\
&=\sum_{v\in T}\bigg| \widehat{\Big(\frac{\varphi}{\varphi-\lambda}\Big)}(v)\bigg|~|g(v)|\\
&\leqslant \|g\|_p\bigg\|\widehat{\Big(\frac{\varphi}{\varphi-\lambda}\Big)}\bigg\|_{q},
\end{align*}
to get that $\Delta\big[\big(\frac{\varphi}{\varphi-\lambda}\big)'g\big]\in \mathcal L^p(T)$. Since $\frac{\varphi}{\varphi-\lambda}\in \mathcal L^{q}(T)$ implies that  $\frac{\varphi}{\varphi-\lambda}\in \mathcal L^\infty(T)$, we obtain that $\big(\frac{\varphi}{\varphi-\lambda}\big)_{-}$ is also bounded. It follows that $\big(\frac{\varphi}{\varphi-\lambda}\big)_{-}g$ is in $\mathcal L^p(T)$, which gives  that $\Delta\big(\frac{\varphi}{\varphi-\lambda}\nabla g\big)\in \mathcal L^p(T)$. Therefore, we obtain that the function defined in (\ref{f}) is in $\mathcal L^p(T)$.

To finish the proof, it remains to verify that
$$(T_{\varphi}-\lambda I)f=g.$$
Indeed, we have by Lemma \ref{T} that $$\frac{\varphi}{\varphi-\lambda}\nabla g\in \mathcal L^{1}(T),$$since $g\in \mathcal L^p(T)$ and $\frac{\varphi}{\varphi-\lambda}, \big(\frac{\varphi}{\varphi-\lambda}\big)' \in \mathcal L^q(T)$. It follows from (\ref{f}) and Lemma \ref{DN} that
\begin{align*}
\nabla f&=\nabla\Bigg[\frac{1}{\lambda}\bigg(\Delta\Big(\frac{\varphi}{\varphi-\lambda}\nabla g\Big)-g\bigg)\Bigg]=\frac{1}{\lambda}\frac{\varphi}{\varphi-\lambda}\nabla g-\frac{\nabla g}{\lambda}=\frac{\nabla g}{\varphi-\lambda}.
\end{align*}
This yields that
\begin{align*}
(T_{\varphi}-\lambda I)f&=T_{\varphi}f-\lambda f=\Delta(\varphi\nabla f)-\lambda f\\
&=\Delta\Big(\frac{\varphi}{\varphi-\lambda}\nabla g\Big)-\Delta\Big(\frac{\varphi}{\varphi-\lambda}\nabla g\Big)+g\\
&=g,
\end{align*}
as desired. This completes the proof of Theorem \ref{spectrum}.
\end{proof}

Now we give a description for the spectrum of $T_\varphi: \mathcal L^\infty(T)\rightarrow \mathcal L^\infty(T)$ with $\varphi, \varphi'\in \mathcal L^1(T)$ in the following theorem.

\begin{thm}\label{spectrum2}
If $\varphi$ and $\varphi'$ are both in $\mathcal L^1(T)$,  then $T_\varphi$ is bounded on $\mathcal L^\infty(T)$ and in this case we have $$\sigma(T_{\varphi})=\mathrm{clos}\{\varphi(v):v\in T\}.$$
\end{thm}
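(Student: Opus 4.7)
The plan is to mirror the proof of Theorem \ref{spectrum}, with $\mathcal L^\infty(T)$ replacing $\mathcal L^p(T)$ and the conjugate index $q=1$ replacing $q\in(1,\infty)$. Boundedness of $T_\varphi$ on $\mathcal L^\infty(T)$ is immediate from Lemma \ref{TB2}. One inclusion of the spectral identity is cheap: Lemma \ref{PS} (which applies because $\varphi,\varphi'\in \mathcal L^1(T)=\mathcal L^q(T)$) gives $\{\varphi(v):v\in T\}\subset \sigma_p(T_\varphi)\subset \sigma(T_\varphi)$, and the closedness of $\sigma(T_\varphi)$ finishes this direction.

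For the reverse inclusion, I would fix $\lambda\notin \mathrm{clos}\{\varphi(v):v\in T\}$ and pick $\delta_0>0$ such that $|\varphi(v)-\lambda|\geqslant \delta_0$ and $|\varphi_-(v)-\lambda|\geqslant \delta_0$ for every $v\in T$; since $\varphi\in\mathcal L^1(T)$ on an infinite set forces $0\in \mathrm{clos}\{\varphi(v):v\in T\}$, we automatically have $\lambda\neq 0$. Lemma \ref{PS} already tells us $T_\varphi-\lambda I$ is injective, so by the Banach inverse theorem it remains to produce, for each $g\in\mathcal L^\infty(T)$, a bounded preimage. Following the construction in Theorem \ref{spectrum}, I propose the candidate
\begin{equation*}
f:=\frac{1}{\lambda}\bigg[\Delta\Big(\frac{\varphi}{\varphi-\lambda}\nabla g\Big)-g\bigg].
\end{equation*}

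The technical heart of the argument is to verify that $\psi:=\frac{\varphi}{\varphi-\lambda}$ satisfies $\psi,\psi'\in \mathcal L^1(T)$. The inequality $|\psi|\leqslant |\varphi|/\delta_0$ disposes of $\psi$, while a short algebraic identity gives
\begin{equation*}
\psi'(v)=\frac{\varphi(v)}{\varphi(v)-\lambda}-\frac{\varphi_-(v)}{\varphi_-(v)-\lambda}=\frac{-\lambda\,\varphi'(v)}{[\varphi(v)-\lambda][\varphi_-(v)-\lambda]},
\end{equation*}
so $|\psi'(v)|\leqslant |\lambda|\delta_0^{-2}|\varphi'(v)|$ and the hypothesis $\varphi'\in\mathcal L^1(T)$ gives $\psi'\in\mathcal L^1(T)$. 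This is the $\mathcal L^\infty$-analogue of the $\widehat{\big(\frac{\varphi}{\varphi-\lambda}\big)}\in \mathcal L^q(T)$ estimate carried out in Theorem \ref{spectrum}, and I expect it to be the main (though not deep) obstacle of the proof, as it is the only place the integrability of $\varphi'$ is used.

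Once $\psi,\psi'\in\mathcal L^1(T)$ is in hand, Lemma \ref{T} with $p=\infty$, $q=1$ ensures $\psi\nabla g\in \mathcal L^1(T)$ and $\Delta(\psi\nabla g)=T_\psi g=\psi_-\,g+\Delta(\psi' g)$; boundedness of $\psi_-$ together with the crude bound $|\Delta(\psi' g)(v)|\leqslant \|g\|_\infty\|\psi'\|_1$ then force $f\in \mathcal L^\infty(T)$. Finally, applying $\nabla$ to the defining formula for $f$ and invoking Lemma \ref{DN} on $\psi\nabla g\in \mathcal L^1(T)$ yields $\nabla f=\nabla g/(\varphi-\lambda)$, so that $T_\varphi f=\Delta(\varphi\nabla f)=\Delta(\psi\nabla g)$ and hence $(T_\varphi-\lambda I)f=g$, as required.
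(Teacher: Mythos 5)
Your proposal is correct and follows essentially the same route as the paper: boundedness from Lemma \ref{TB2}, one inclusion from Lemma \ref{PS}, and for the reverse inclusion the same candidate preimage $f=\frac{1}{\lambda}\big[\Delta\big(\frac{\varphi}{\varphi-\lambda}\nabla g\big)-g\big]$, with the key estimate being that $\psi=\frac{\varphi}{\varphi-\lambda}$ and $\psi'$ lie in $\mathcal L^1(T)$ (the paper phrases this as the bounds $\|\psi_- g\|_\infty\leqslant \delta_0^{-1}\|\varphi\|_\infty\|g\|_\infty$ and $\|\Delta(\psi' g)\|_\infty\leqslant |\lambda|\delta_0^{-2}\|\varphi'\|_1\|g\|_\infty$, which are the same computation). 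Your explicit remark that $\varphi\in\mathcal L^1(T)$ forces $0\in\mathrm{clos}\{\varphi(v):v\in T\}$, hence $\lambda\neq 0$, is a welcome clarification of a point the paper leaves implicit.
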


\begin{proof}
Lemma \ref{TB2} tells us that $T_\varphi: \mathcal L^\infty(T)\rightarrow \mathcal L^\infty(T)$ is bounded if $\varphi\in \mathcal L^1(T)$ and $\varphi'\in \mathcal L^1(T)$.
According to the proof of Theorem \ref{spectrum}, we need only to show that
$$\Big(\frac{\varphi}{\varphi-\lambda}\Big)_{-}g\ \ \ \  \mathrm{and} \ \ \ \  \Delta\Big[\Big(\frac{\varphi}{\varphi-\lambda}\Big)'g\Big]$$
are both in $\mathcal L^\infty(T)$
if $\lambda\notin \mathrm{clos}\{\varphi(v): v\in T\}$ and $g\in \mathcal L^1(T)$.  In fact,
$$\bigg\|\Big(\frac{\varphi}{\varphi-\lambda}\Big)_{-}g\bigg\|_\infty=\sup_{v\in T}\bigg|\Big(\frac{\varphi}{\varphi-\lambda}\Big)_{-}(v)g(v)\bigg|\leqslant \frac{\|g\|_\infty}{\delta_0}\|\varphi\|_\infty,$$
where $\delta_0$ is the constant defined in the proof of Theorem \ref{spectrum}. Moreover, since
\begin{align*}
\bigg\|\Delta\Big[\Big(\frac{\varphi}{\varphi-\lambda}\Big)'g\Big]\bigg\|_\infty&=\sup_{v\in T}\bigg|\sum_{u\in S_v}g(u)\Big(\frac{\varphi}{\varphi-\lambda}\Big)'(u)\bigg|\\
&\leqslant \sup_{v\in T}\sum_{u\in S_v}|g(u)|\bigg|\frac{\varphi(u)}{\varphi(u)-\lambda}-\frac{\varphi_-(u)}{\varphi_-(u)-\lambda}\bigg|\\
&\leqslant \frac{|\lambda|~\|g\|_\infty}{\delta_0^2}\sup_{v\in T}\Big[\sum_{u\in S_v}|\varphi(u)-\varphi_{-}(u)|\Big]\\
&=  \frac{|\lambda|~\|g\|_\infty}{\delta_0^2}\|\varphi'\|_1,
\end{align*}
we obtain that $\big(\frac{\varphi}{\varphi-\lambda}\big)_{-}g\in \mathcal L^\infty(T)$ and   $\Delta\big[\big(\frac{\varphi}{\varphi-\lambda}\big)'g\big]\in \mathcal L^\infty(T)$.

The rest of the proof of this theorem is similar to the previous one, so we omit the details. This finishes the proof of Theorem \ref{spectrum2}.
\end{proof}

\begin{rem}
The conclusions in Theorems \ref{spectrum} and \ref{spectrum2} are comparable to the corresponding results for analytic Toeplitz operators on the Hardy and Bergman spaces of the open unit disk, see the books \cite{Dou} and \cite{Zhu} for more information.
\end{rem}

\section{The positivity of Toeplitz operators}\label{positivity}
 We say that a bounded linear operator $T$ is self-adjoint (positive) on a complex  Hilbert space $\mathscr H$ if
 $\langle Tx, x\rangle_{\mathscr H}$
 is real (nonnegative) for every $x$ in $\mathscr H$. In this section, we study when a bounded Toeplitz operator  is self-adjoint (positive) on the Hilbert space $\mathcal L^2(T)$.

 Observing that the adjoint of the Topelitz operator $T_\varphi$ does not equal $T_{\overline{\varphi}}$ in general, which leads to certain difficulties in the study of the self-adjointness and positivity of Toeplitz operators on $\mathcal L^2(T)$. However, we are able to obtain a complete characterization for the self-adjoint Toeplitz operators on $\mathcal L^2(T)$ by  using their point spectra.

\begin{thm}\label{SP}
Suppose that $\varphi$ is a function in $\mathcal L^2(T)$ such that $\widehat{\varphi}\in \mathcal L^2(T)$. Then the following three conditions  are equivalent:\\
$(1)$ $T_\varphi$ is positive on $\mathcal L^2(T)$;\\
$(2)$ $T_\varphi$ is self-adjoint on $\mathcal L^2(T)$;\\
$(3)$ $\varphi(v)=0$ for all $v\in T$.
\end{thm}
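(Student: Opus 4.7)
The implications $(3)\Rightarrow(1)\Rightarrow(2)$ are immediate: condition $(3)$ makes $T_\varphi$ the zero operator (which is trivially positive), and every positive operator on a complex Hilbert space is self-adjoint. So the only real content is $(2)\Rightarrow(3)$, and my plan is to exploit the fact that on the discrete space $\mathcal L^2(T)$ we have the orthonormal system of point masses $\{\chi_w:w\in T\}$, which lets us read off the matrix entries of $T_\varphi$ from a single explicit formula.

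First I would observe that the hypothesis $\widehat{\varphi}\in\mathcal L^2(T)$ forces $\varphi'\in\mathcal L^2(T)$, so Lemma \ref{T} applies with $p=q=2$. Applying this lemma to the test function $f=\chi_w$ and using the definitions of $\varphi_{-}$ and $\Delta$, I would compute
\[
T_\varphi\chi_w \;=\; \varphi_{-}(w)\,\chi_w + \varphi'(w)\,\chi_{P_w},
\]
where $P_w$ denotes the path from $o$ to $w$ (i.e., $w$ together with all its ancestors). Evaluating at a vertex $u$ then gives the matrix entry
\[
\langle T_\varphi\chi_w,\chi_u\rangle \;=\;
\begin{cases}
\varphi(w), & u=w,\\
\varphi'(w), & u\text{ is a strict ancestor of }w,\\
0, & \text{otherwise,}
\end{cases}
\]
after using $\varphi_{-}(w)+\varphi'(w)=\varphi(w)$ in the diagonal case.

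Now I would invoke self-adjointness, which demands the matrix $(\langle T_\varphi\chi_w,\chi_u\rangle)$ to be Hermitian. The diagonal identity $\varphi(w)=\overline{\varphi(w)}$ shows that $\varphi$ is real-valued (this part is also immediate from Lemma \ref{PS}, since eigenvalues of a self-adjoint operator are real). The crucial off-diagonal test is this: take any $w\neq o$ and let $u$ be a strict ancestor of $w$. Then $w\notin P_u$ (because $w$ is a strict descendant of $u$), so from the trichotomy above $\langle T_\varphi\chi_u,\chi_w\rangle=0$, whereas $\langle T_\varphi\chi_w,\chi_u\rangle=\varphi'(w)$. Hermitian symmetry therefore forces $\varphi'(w)=0$, i.e., $\varphi(w)=\varphi(w^{-})$, for every $w\neq o$.

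Finally, since $T$ is connected, iterating $\varphi(w)=\varphi(w^{-})$ along paths to the root shows that $\varphi$ is constant on $T$; and since $T$ is countably infinite while $\varphi\in\mathcal L^2(T)$, the only such constant is $0$. This yields $(3)$ and closes the cycle. I do not expect any serious obstacle: the argument is essentially a two-line Hermitian-matrix computation once the clean formula for $T_\varphi\chi_w$ is in hand, and the only point meriting care is the compatibility condition $\varphi'\in\mathcal L^2(T)$ needed to invoke Lemma \ref{T} and to justify the manipulation $T_\varphi\chi_w=\chi_w\varphi_{-}+\Delta(\chi_w\varphi')$.
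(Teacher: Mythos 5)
Your proof is correct and is essentially the paper's argument in polarized form: the paper tests self-adjointness on the single function $g=\chi_\xi+\mathrm{i}\,\chi_\eta$ supported on an adjacent pair $\eta\in\mathrm{Chi}(\xi)$, which amounts to exactly the Hermitian-symmetry check $\langle T_\varphi\chi_w,\chi_u\rangle=\overline{\langle T_\varphi\chi_u,\chi_w\rangle}$ you perform, and both arguments conclude by forcing $\varphi'$ to vanish off the root, hence $\varphi$ constant, hence zero by square-summability. The only cosmetic difference is that the paper derives real-valuedness of $\varphi$ from the point-spectrum description (Lemma \ref{PS}) rather than from the diagonal matrix entries.
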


\begin{proof}
Clearly, $(1)\Rightarrow (2)$ is trivial and $(3)\Rightarrow (1)$ follows immediately from Lemma \ref{T}. To complete the proof of this theorem, it remains to show that $(2)\Rightarrow (3)$.

Now we suppose that $T_\varphi$ is self-adjoint on $\mathcal L^2(T)$. Then we have by Lemma \ref{PS} that $\varphi$ must be real-valued, since
$$\{\varphi(v): v\in T\}=\sigma_p(T_\varphi)\subset \sigma(T_\varphi)\subset \mathbb R$$
or
$$\{0\}\cup \{\varphi(v): v\in T\}=\sigma_p(T_\varphi)\subset \sigma(T_\varphi)\subset \mathbb R.$$

Let $f$ be any function in $\mathcal L^2(T)$. Since $\widehat{\varphi}\in \mathcal L^2(T)$,  we see  that the series $\sum\limits_{v\in T}[T_{\varphi}f(v)]\overline{f(v)}$ is absolutely convergent. Now  using Lemma \ref{T} again gives
\begin{align}\label{T_phi}
\begin{split}
\langle T_{\varphi}f, f\rangle&=\sum\limits_{v\in T}[T_{\varphi}f(v)]\overline{f(v)}=\sum\limits_{v\in T}\big[f(v)\varphi_{-}(v)\overline{f(v)}+\Delta(f\varphi')(v)\overline{f(v)}\big]\\
&=\sum\limits_{v\in T}|f(v)|^{2}\varphi_-(v)+\sum\limits_{v\in T}\overline{f(v)}\Delta(f\varphi')(v).
\end{split}
\end{align}
Furthermore, we have by the definitions of  $\varphi'$ and the operator $\Delta$ that
\begin{align}\label{sum}
\begin{split}
&\sum\limits_{v\in T}\overline{f(v)}\Delta(f\varphi')(v)\\
&\ =\sum\limits_{v\in T}\Big[\overline{f(v)}\sum\limits_{u\in S_{v}}(f\varphi')(u)\Big]\\
&\ =\sum\limits_{v\in T}\Big\{\overline{f(v)}\sum\limits_{u\in S_{v}}f(u)\big[\varphi(u)-\varphi_-(u)\big]\Big\}\\
&\ =\sum\limits_{v\in T}|f(v)|^{2}\varphi(v)-\sum\limits_{v\in T}|f(v)|^{2}\varphi_-(v)+\sum\limits_{v\in T}\Big\{\overline{f(v)}\sum\limits_{u\in S_{v}\backslash\{v\}}\big[f(u)\varphi(u)-f(u)\varphi_-(u)\big]\Big\}.
\end{split}
\end{align}

We first show that $\varphi$ is a constant function. Otherwise, we can choose two vertices $\xi$ and $\eta$ in $T$ such that $\varphi(\xi)\neq \varphi(\eta)$. Since the path joining $\xi$ and $\eta$ is a finite sequence of  distinct vertices, we may assume that $\xi\thicksim \eta$. In other words, we obtain that $\xi\in \mathrm{Chi}(\eta)$ or $\eta\in \mathrm{Chi}(\xi)$. Without loss of generality, we may assume that $\eta\in \mathrm{Chi}(\xi)$. Let us  consider the function $g: T\rightarrow \mathbb C$ defined by
\begin{align*}
g(v)=
\begin{cases}
1,& \mathrm{if} \ \ v=\xi,\vspace{2mm}\\
\mathrm{i},& \mathrm{if} \ \ v=\eta,\vspace{2mm}\\
0,& \text{otherwise}.\vspace{2mm}\\
\end{cases}
\end{align*}
It follows from (\ref{T_phi}) and (\ref{sum}) that
\begin{align*}
\langle T_{\varphi}g, g\rangle
&=|g(\xi)|^{2}\varphi(\xi)+|g(\eta)|^{2}\varphi(\eta)+\overline{g(\xi)}\big[g(\eta)\varphi(\eta)-g(\eta)\varphi(\eta^{-})\big]\\
&=|g(\xi)|^{2}\varphi(\xi)+|g(\eta)|^{2}\varphi(\eta)+\overline{g(\xi)}g(\eta)\big[\varphi(\eta)-\varphi(\eta^{-})\big]\\
&=\big[\varphi(\xi)+\varphi(\eta)\big]+\big[\varphi(\eta)-\varphi(\xi)\big]\mathrm{i}.
\end{align*}
This means that $\langle T_{\varphi}g, g\rangle\notin \mathbb R$, which contradicts that $T_\varphi$ is self-adjoint. Thus $\varphi$ is a constant function. However, the condition $\varphi\in \mathcal L^2(T)$ yields that $\varphi$ must be zero. This completes the proof.
\end{proof}

\section{Finite rank Toeplitz operators}\label{finite rank}

The final section is devoted to establishing a equivalent characterization for  finite rank Toeplitz operators on $\mathcal L^p(T)$, where $1\leqslant p \leqslant \infty$.  More concretely, we will show  in the following theorem that the Toeplitz  operator $T_\varphi$ having finite rank if and only if $\{v\in T: \varphi(v)\neq 0\}$ is a  finite subset of $T$.

\begin{thm}\label{FR}
Let $1\leqslant p \leqslant \infty$ and $\varphi$ be a function  in $\mathcal L^q(T)$ such that  $\varphi'\in \mathcal L^q(T)$ and the Toeplitz operator $T_\varphi$ is bounded on $\mathcal L^p(T)$, where $\frac{1}{p}+\frac{1}{q}=1$. Then $T_\varphi$ has finite rank on $\mathcal L^p(T)$ if and only if $\varphi(v)=0$ except for finitely  many points $v\in T$.
\end{thm}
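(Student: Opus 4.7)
The plan is to handle both implications by direct computation with the point-mass functions $e_v := \chi_{\{v\}}$, which lie in every $\mathcal L^p(T)$; this sidesteps Lemma~\ref{T} (which is stated only for $p>1$) and yields a proof that is uniform across $1\le p\le\infty$.

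For the implication $(\Leftarrow)$, assume $F:=\{v\in T:\varphi(v)\ne 0\}$ is finite. Working directly from $T_\varphi f=\Delta(\varphi\nabla f)$, the function $\varphi\nabla f$ is supported in $F$, and its values
\[
(\varphi\nabla f)(w)=\varphi(w)\Bigl[f(w)-\sum_{u\in\mathrm{Chi}(w)}f(u)\Bigr],\qquad w\in F,
\]
depend linearly on the restriction of $f$ to the finite set $F^{*}:=F\cup\bigcup_{w\in F}\mathrm{Chi}(w)$ (finite by local finiteness of $T$). Since $\Delta(e_w)=\chi_{[o,w]}$, the indicator of the unique geodesic path from $o$ to $w$, this yields
\[
T_\varphi f=\sum_{w\in F}\varphi(w)\Bigl[f(w)-\sum_{u\in\mathrm{Chi}(w)}f(u)\Bigr]\chi_{[o,w]},
\]
so the range of $T_\varphi$ is contained in the finite-dimensional subspace $\mathrm{span}\{\chi_{[o,w]}:w\in F\}\subset\mathcal L^p(T)$.

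For the converse, I first derive the key identity
\[
T_\varphi e_v=\varphi(v)\,e_v+\varphi'(v)\,\chi_{[o,v^{-}]}\qquad(v\in T),
\]
using $\nabla e_v=e_v-e_{v^{-}}$ (with the convention $e_{o^{-}}\equiv 0$) together with $\Delta(e_w)=\chi_{[o,w]}$. Assume for contradiction that $\mathrm{supp}(\varphi)$ is infinite. Since $T$ is locally finite, each set $\{w:|w|\le N\}$ is finite, so $\mathrm{supp}(\varphi)$ must contain vertices of arbitrarily large length; pick $v_1,v_2,\ldots\in\mathrm{supp}(\varphi)$ with $|v_1|<|v_2|<\cdots$. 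The claim is that $\{T_\varphi e_{v_k}\}_{k\ge 1}$ is linearly independent in $\mathcal L^p(T)$, contradicting the finite-rank hypothesis. Given a finite relation $\sum_{k=1}^{N}c_k\,T_\varphi e_{v_k}=0$, inspect the coefficient of $e_{v_N}$: for $k<N$, the inequality $|v_N|>|v_k|>|v_k^{-}|$ forces both $v_N\ne v_k$ and $v_N\notin[o,v_k^{-}]$, so the contribution is zero; for $k=N$, the coefficient equals $c_N\varphi(v_N)$ (since $v_N\notin[o,v_N^{-}]$), which forces $c_N=0$ because $\varphi(v_N)\ne 0$. Iterating the same reasoning kills every $c_k$.

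The main obstacle is organizing the independence argument to cover the worst geometric case, namely that in which all the selected $v_k$ lie along a single branch of $T$; selecting them by strictly increasing length and then eliminating coefficients from the longest-length term downward is precisely the device that overcomes this difficulty. The construction does not distinguish between values of $p$, which is what makes the equivalence hold uniformly for every $1\le p\le\infty$.
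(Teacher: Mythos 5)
Your proof is correct, and the necessity half takes a genuinely different---and in fact tighter---route than the paper's. For sufficiency both arguments are essentially the same observation (the range of $T_\varphi$ lands in a finite-dimensional subspace determined by $\mathrm{supp}(\varphi)$); the paper spans by point masses $e_w$ with $|w|\leqslant\max_k|v_k|$, while you span by the path indicators $\chi_{[o,w]}$, $w\in F$, which is an equally valid (and slightly more economical) choice. The real divergence is in necessity: the paper starts from the assumption $\mathrm{range}(T_\varphi)\subset\mathrm{span}\{e_{v_1},\dots,e_{v_n}\}$, which does not follow from finite rank alone---a finite-dimensional subspace of $\mathcal L^p(T)$ need not be contained in any finite span of point masses (consider the span of a single function with infinite support)---so the paper's argument as written has a gap at that step. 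You avoid it entirely by computing $T_\varphi e_v=\varphi(v)e_v+\varphi'(v)\chi_{[o,v^{-}]}$ directly from the definition (the identity checks out, since $\nabla e_v=e_v-e_{v^{-}}$ and $\Delta e_w=\chi_{[o,w]}$, and $\varphi\nabla e_v$ has finite support so no integrability hypothesis is needed) and then exhibiting infinitely many linearly independent vectors in the range via the triangular elimination by increasing length. This buys you two things: a proof that is uniform in $p$ (including $p=1$, where Lemma~\ref{T} is not available) and a necessity argument that actually uses only the finite-rank hypothesis. The one point worth stating explicitly in a final write-up is the standing convention $\chi_{[o,o^{-}]}\equiv 0$ so the identity also covers $v=o$; you do flag this, so nothing is missing.
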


\begin{proof}
We show the sufficiency first. Suppose that $\varphi(v)=0$ for all $v\in T$ except $v_1, v_2, \cdots, v_n$.  For any $f\in \mathcal L^p(T)$, we have
\begin{align*}
T_\varphi f(v)=\Delta (\varphi \nabla f)(v)=\sum_{u\in S_v}\varphi(u)\nabla f(u), \ \ \ \  v\in T.
\end{align*}
It follows that $T_\varphi f(v)=0$ for all $v\in T$ with $|v|>\max\{|v_k|: k=1, 2, \cdots, n\}$. Let
$$W:=\Big\{w\in T: |w|\leqslant \max_{1\leqslant k \leqslant n} |v_k|\Big\}.$$
Then $W$ is a finite set and its elements  can be enumerated by
$$W=\{w_1, w_2, \cdots, w_N\}.$$

Fix a vertex $\xi\in T$ and define
 \begin{align}\label{basis}
e_\xi(v)=\begin{cases}
1,& \mathrm{if} \ \ v=\xi,\vspace{2mm}\\
0,& \text{otherwise}.
\end{cases}
\end{align}
Since $T_\varphi f$ is a vector in $\mathcal L^p(T)$, it can be expressed as
$$T_\varphi f=c_1e_{w_1}+c_2e_{w_2}+\cdots+c_Ne_{w_N}$$
with constants $c_1, c_2, \cdots, c_N$.
This implies that
$$\mathrm{range}(T_\varphi)\subset \mathrm{span}\big\{e_{w_1}, e_{w_2},\cdots, e_{w_N}\big\},$$
to obtain that $T_\varphi$ is of finite rank on $\mathcal L^p(T)$.

To show the necessity, we suppose that $T_\varphi$ is a finite rank operator on $\mathcal L^p(T)$ and $$\mathrm{range}(T_\varphi)\subset \mathrm{span}\big\{e_{v_1}, e_{v_2},\cdots, e_{v_n}\big\},$$
where  $v_k$ is a vertex of $T$ and $e_{v_k}$ is defined in (\ref{basis}). According to Lemma \ref{T} and the computations in (\ref{sum}), we obtain that
\begin{align}\label{ew}
\begin{split}
T_{\varphi}e_{w}(v)&=e_{w}(v)\varphi(v^-)+\sum_{u\in S_{v}}e_{w}(u)[\varphi(u)-\varphi(u^-)]\\
&=e_{w}(v)\varphi(v^-)+e_{w}(v)[\varphi(v)-\varphi(v^-)]+\sum_{u\in S_{v}\backslash\{v\}}e_{w}(u)[\varphi(u)-\varphi(u^-)]\\
&=e_{w}(v)\varphi(v)+\sum_{u\in S_{v}\backslash \{v\}}e_{w}(u)[\varphi(u)-\varphi(u^-)]
\end{split}
\end{align}
for every $v\neq o$ and $w\in T$.

If $\varphi$ does  not vanish at infinitely many vertices  of $T$, then we can choose a vertex $w\in T$ such that
$|w|>\max\big\{|v_{1}|, |v_2|, \cdots, |v_n|\big\}$ and  $\varphi(w)\neq 0$. It follows from (\ref{ew}) that
$$T_{\varphi}e_w(w)=\varphi(w).$$
We claim that $T_{\varphi} e_w\notin \mathrm{span}\big\{e_{v_1}, e_{v_2},\cdots, e_{v_n}\big\}$. Indeed, if
$$T_{\varphi} e_w=c_1e_{v_1}+c_2e_{v_2}+\cdots+c_ne_{v_n}$$
for some scalars $c_1, c_2, \cdots, c_n$, then we would have
$$0=c_1e_{v_1}(w)+c_2e_{v_2}(w)+\cdots+c_ne_{v_n}(w)=T_{\varphi} e_w(w)=\varphi(w)\neq 0,$$
which is a contradiction.  This yields that $$\mathrm{dim}\big[\mathrm{range}(T_\varphi)\big]>n.$$
But this contradicts that the dimension of $\mathrm{range}(T_\varphi)$ is less than or equal to $n$. Therefore,  the proof of Theorem \ref{FR} is finished.
\end{proof}

\begin{rem}
It is worth noting that Theorem \ref{FR} is analogous to the corresponding results for finite rank Toeplitz operators on the Bergman and Fock spaces, see \cite{Luc} and \cite[Theorem 6.42]{Zhu2} respectively.
\end{rem}
\vspace{2mm}

\subsection*{Acknowledgment}
This work was partially supported by  NSFC (grant number: 11701052) and Chongqing Natural Science Foundation (cstc2019jcyj-msxmX0337). The third author was partially supported by the Fundamental Research Funds for the Central Universities (grant numbers: 2020CDJQY-A039, 2020CDJ-LHSS-003).\vspace{5mm}\\


\begin{thebibliography}{99}

\bibitem{AB}
Agrawal, A.,  Berge, A., Colbert-Pollack, S.,  Mart\'{\i}nez-Avenda\~{n}o, R. A.,  Sliheet, E.:  Norms, kernels and eigenvalues of some infinite graphs.  arXiv:1812.08276  (2018)


\bibitem{Al2}
 Allen, R. F.,  Colonna, F.,  Easley, G. R.:  Multiplication operators between Lipschitz-type spaces on a tree. \textit{Int. J. Math. Sci.} \textbf{36}, Article ID: 472495 (2011)

\bibitem{Al3}
 Allen, R. F.,  Colonna, F.,  Easley, G. R.:  Multiplication operators on the iterated logarithmic Lipschitz spaces of a tree.  \textit{Mediterr. J. Math.} \textbf{9}, 575-600 (2012)


\bibitem{Al1}
 Allen, R. F.,  Colonna, F.,  Easley, G. R.:  Multiplication operators on the weighted Lipschitz space of a tree. \textit{J. Operator Theory} \textbf{69}, 209-231 (2013)

\bibitem{AC}
 Allen, R. F.,  Colonna, F.,  Easley, G. R.:    Composition operators on the Lipschitz space of a tree. \textit{Mediterr. J. Math.} \textbf{11}, 97-108 (2014)


\bibitem{Al4}
Allen, R. F., Colonna, F.,  Prudhom, A.:  Multiplication operators between iterated logarithmic Lipschitz spaces of a tree.  \textit{Mediterr. J. Math.}  \textbf{14}, Paper No. 212  (2017)


\bibitem{AP}
Allen, R. F.,  Pons, M. A.: Composition operators on weighted Banach spaces of a tree.   \textit{Bull. Malays. Math. Sci. Soc.} \textbf{41}, 1805-1818 (2018)



\bibitem{AP1}
 Allen, R. F.,  Pons, M. A.:   Weighted composition operators on discrete weighted Banach spaces. \textit{Acta Sci. Math. (Szeged)} (2022) https://doi.org/10.1007/s44146-022-00051-w





\bibitem{Ca1}
Cartier, P.: Fonctions harmoniques sur un arbre, in: Symposia Mathematica, vol. IX, Convegno di Calcolo delle Probabilit\`{a}, INDAM, Rome, 1971. Academic Press, London (1972)

\bibitem{Ca2}
Cartier, P.:  G\'{e}om\'{e}trie et analyse sur les arbres, in: S\'{e}minaire Bourbaki, 24\`{e}me ann\'{e}e (1971/1972), Exp. No. 407, in: Lecture Notes in Math., vol. 317.  Springer, Berlin,  pp. 123-140 (1973)






\bibitem{Co1}
Colonna, F.,  Easley, G. R.:  Multiplication operators on the Lipschitz space of a tree.  \textit{Integral Equ. Oper. Theory}  \textbf{68}, 391-411 (2010)

\bibitem{Co2}
Colonna, F., Easley, G. R.:  Multiplication operators between the Lipschitz space and the space of bounded functions on a tree.  \textit{Mediterr. J. Math.} \textbf{9}, 423-438 (2012)


\bibitem{Luc}
Luecking, D.:  Finite rank Toeplitz operators on the Bergman space. \textit{Proc. Amer. Math. Soc.} \textbf{136}, 1717-1723 (2008)



\bibitem{CM}
Colonna, F.,   Mart\'{\i}nez-Avenda\~{n}o, R. A.:  Some class of operators with symbol on the Lipschitz space of a tree.  \textit{Mediterr. J. Math.} \textbf{14}, Paper No. 18 (2017)



\bibitem{Dou}
Douglas, R.:  \textit{Banach Algebra Techniques in Operator Theory}, second edition, Graduate Texts in Mathematics, vol. 179.  Springer, New York (1998)




\bibitem{J}  Jab{\l}o\'{n}ski, Z. J., Jung, I. B.,  Stochel, J.: Weighted shifts on directed trees.
\textit{Mem. Amer. Math. Soc.}  \textbf{216}, 1017 (2012)

\bibitem{Ma1}  Mart\'{\i}nez-Avenda\~{n}o, R. A.:  Hypercyclicity of shifts on weighted $L^{p}$ spaces of directed trees.  \textit{J. Math. Anal. Appl.} \textbf{446}, 823-842 (2017)

\bibitem{Ma}
Mart\'{\i}nez-Avenda\~{n}o, R. A.,  Rivera-Guasco, E.: The forward and backward shift on the Lipschitz space of a tree. \textit{Integral Equ. Oper. Theory}  \textbf{92}, Paper No. 3 (2020)

\bibitem{Mu}
Muthukumar, P.,  Ponnusamy, S.:  Composition operators on Hardy spaces of the homogenous rooted trees. \textit{Monatsh. Math.}  \textbf{192}, 721-743 (2020)

\bibitem{Pa1}
Pavone, M.:  Chaotic composition operators on trees. \textit{Houston J. Math.} \textbf{18}, 47-56 (1992)

\bibitem{Pa}
Pavone, M.:  Toeplitz operators on discrete groups. \textit{J. Operator Theory} \textbf{27}, 359-384 (1992)

\bibitem{Pa2}
Pavone, M.: Partially ordered groups, almost invariant sets, and Toeplitz operators.
\textit{J. Funct. Anal.}  \textbf{113}, 1-18 (1993)

\bibitem{ZZ}
Zhang, X., Zhao, X.: Toeplitz operator and the operator $\nabla$ on the $L^p$ space of a tree.  \textit{J. Math. Anal. Appl.} \textbf{516}, Paper No. 126540 (2022)



\bibitem{Zhu}
Zhu, K.: \textit{Operator Theory in Function Spaces},  2nd edn, Math. Surveys Monogr., vol. 138. American Mathematical Society (2007)


\bibitem{Zhu2}
Zhu, K.: \textit{Analysis on Fock Spaces}, vol. 263. Springer-Verlag, New York (2012)


\end{thebibliography}
\end{document}